\documentclass[11pt]{article}

\usepackage[T1]{fontenc}
\usepackage{lmodern}
\usepackage{microtype}
\usepackage{amsmath,amssymb,amsthm}
\usepackage[margin=1in]{geometry}
\usepackage[colorlinks=true,linkcolor=blue,citecolor=blue,urlcolor=blue]{hyperref}

\hypersetup{
  pdftitle={Reducing Every Set of 36 Consecutive Integers to Zero by Differences of Squares},
  pdfauthor={Zhao Shen and Youran Wu},
  pdfkeywords={difference of squares, consecutive integers, reduction to zero,
    combinatorial number theory}
}

\newtheorem{theorem}{Theorem}[section]
\newtheorem{lemma}[theorem]{Lemma}
\newtheorem{corollary}[theorem]{Corollary}

\newcommand{\F}{\mathcal F}
\newcommand{\Z}{\mathbb Z}

\title{Reducing Every Set of 36 Consecutive Integers to Zero\\
by Differences of Squares}
\author{
  Zhao Shen\\
  Central South University
  \and
  Youran Wu\thanks{Email: \href{mailto:youranwu1221@outlook.com}{youranwu1221@outlook.com}.}\\
  Central South University  
}
\date{}

\begin{document}

\maketitle

\begin{abstract}

For a finite multiset of integers, repeatedly choose two entries \(a,b\)
and replace them with \(|a^{2}-b^{2}|\).  Hickerson and Kleber asked
whether, for every integer $n$, the set
\(\{n,n+1,\ldots,n+35\}\) can be reduced to zero.  We give an explicit
reduction.  Together with their results for lengths 12 and 24, this gives,
for every positive integer $L$,
\[
\{n,n+1,\dots,n+L-1\}\text{ reduces to zero for every }n\in\mathbb Z
\Longleftrightarrow 12\mid L\text{ and }L\ge 24.
\]
\end{abstract}

\noindent\textbf{Keywords.}
Difference of squares; consecutive integers; reduction to zero;
combinatorial number theory.

\medskip
\noindent\textbf{2020 Mathematics Subject Classification.}
Primary 11B75.

\section{Introduction}

For integers \(a,b\), define
\[
\mathcal{F}(a,b)=|a^{2}-b^{2}|.
\]
A reduction step removes two entries \(a,b\) from a finite multiset \(\mathcal{A}\) and inserts \(\mathcal{F}(a,b)\). Each such operation decreases the size of \(\mathcal{A}\) by one. By repeating this process until a single entry remains, if final entry can be \(0\), we say that \(\mathcal{A}\) reduces to zero.

For a positive integer $L$ and an integer $n$, write
\[
 I_L(n)=[n,n+L-1]\cap\Z
       =\{n,n+1,\ldots,n+L-1\}.
\]

Hickerson and Kleber~\cite{HK} proved that every set of 24 or 60 consecutive integers reduces to zero. Congruence conditions show that if every set of \(L\) consecutive integers reduces to zero, then \(12\mid L\). Their exhaustive search ruled out \(L=12\). Their reductions for 24 and 60 consecutive integers cover every positive multiple of 12 except 12 and 36. The case \(L=36\) was left open~\cite[Open Problem~1]{HK}. Our main result answers this remaining case.

\begin{theorem}\label{th:36}
For every $n\in\Z$, the set $I_{36}(n)$ reduces to zero.
\end{theorem}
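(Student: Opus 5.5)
The plan is to give an explicit reduction that works uniformly in $n$: the entries of $I_{36}(n)$ are treated as polynomials in $n$ rather than as numbers. The basic gadget is the one Hickerson and Kleber use. For entries $a=x+d$ and $b=x+e$ we have
\[
(x+d)^2-(x+e)^2=(d-e)(2x+d+e),
\]
so a pair of entries with a fixed offset produces a linear polynomial in $x$. Combining two such linear polynomials with the same leading coefficient, $c(2x+s)$ and $c(2x+t)$, gives
\[
c^2\bigl((2x+s)^2-(2x+t)^2\bigr)=4c^2(s-t)\,x+c^2(s^2-t^2).
\]
This is again linear in $x$. With a suitable further matching the $n$-dependence cancels, leaving constants. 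Once everything is constant, the absolute value is harmless.

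First I will split the 36 entries into blocks. For each block I will find a reduction tree whose output is a constant independent of $n$ (or, after pairing, two equal outputs). I will also reserve some entries whose output only needs to cancel against other outputs.

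The key sub-step is to pair the 36 entries into 18 pairs $(n+i,n+j)$ and group those pairs into quadruples of pairs. Within each quadruple, the corresponding linear forms $(j-i)(2n+i+j)$ are combined so that the $n$-terms cancel. A natural choice is pairs with the same difference $d$ and sums $s_1,s_2,s_3,s_4$ satisfying $s_1-s_2=s_3-s_4$. Then the two second-level outputs $4d^2(s_1-s_2)n+\cdots$ and $4d^2(s_3-s_4)n+\cdots$ have equal leading coefficients, and their difference of squares is linear again with a computable coefficient. The target is to make all the surviving quantities constants. Those constants then form a fixed finite multiset, and I must check that this multiset reduces to zero, either by hand or by a small computer search.

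The sign issue has to be handled explicitly. Each $\F(a,b)$ equals $\pm(a^2-b^2)$ with a sign that depends on $n$. Because every later step squares its inputs, only the signs of the final constant-producing steps matter. For the finitely many small $n$ where some intermediate linear form could change sign or vanish, I will verify directly (by computer) that $I_{36}(n)$ reduces to zero. By the symmetry $x\mapsto -x$, which preserves $\F$, $I_{36}(n)$ is equivalent to the set $\{-n-35,\dots,-n\}=I_{36}(-n-35)$, so it suffices to treat $n\ge -17$. Small $n$ is then a finite check.

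The main obstacle is finding the block structure. The congruence obstruction behind $12\mid L$ shows that the counts modulo $2$, $3$, $4$, $8$ and so on must be balanced. The 24-element template of \cite{HK} cannot simply be added to a 12-element one, because 12 fails. So the 36 entries must be interleaved nontrivially. I would first try to use the 24-reduction on $I_{24}(n)$ and cancel its output against a constant produced by the remaining 12 entries $I_{12}(n+24)$, which need not reduce to zero on their own. Failing that, I would run a computer search over pairings with parametrized offsets, requiring linear cancellation at each level, and present the resulting explicit tree as the proof.
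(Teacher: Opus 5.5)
You have not given a proof. The theorem is the existence of an explicit reduction, and you defer exactly that step (``the main obstacle is finding the block structure'', ``run a computer search'').

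The mechanism you describe also cannot succeed. In a reduction that holds as an identity in $n$, every intermediate value is $\pm P(n)$ for a polynomial $P$, and $\F(\pm P,\pm Q)=|P^2-Q^2|$. Suppose $P^2-Q^2$ is a nonzero constant $c$ as a polynomial. Then $(P-Q)(P+Q)=c$ forces $P-Q$ and $P+Q$ to be constant, hence $P$ and $Q$ are constant. Since also $\F(0,P)=P^2$, induction from the nonconstant entries $n+i$ shows that every value is either $0$ or nonconstant. So the $n$-dependence never ``cancels, leaving constants''. Your fixed multiset of constants could only consist of zeros, and $n$ disappears only when two branches give the same polynomial up to sign. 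Your parenthetical ``two equal outputs'' is in fact the only option.

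Your natural quadruples cannot achieve even that. Four pairs with common difference $d$ and sums $s_1,\dots,s_4$ give second-level outputs $d^2|s_1-s_2|\,|4n+s_1+s_2|$ and $d^2|s_3-s_4|\,|4n+s_3+s_4|$. These agree only if $\{s_1,s_2\}=\{s_3,s_4\}$. That repeats pairs, because a pair is determined by its difference and its sum. Matching leading coefficients is the wrong target. You need $d^2|s_1-s_2|=(d')^2|s_3-s_4|$ together with $s_1+s_2=s_3+s_4$, and this forces $d\ne d'$. Your fallback fails for the same reason: the nonconstant entries of $I_{12}(n+24)$ cannot produce a nonzero constant.

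The missing idea is to close every block by an exact coincidence of two branches. The paper does this after partitioning the offsets into $A_1,\dots,A_5$.
\begin{itemize}
\item In $A_1$, one half has difference $34$ with sums $34,36$, the other difference $17$ with sums $31,39$. Both give $4624|2n+35|$; this is the corrected condition above.
\item In $A_5$, each half mixes differences $22$ and $11$, and both give $363|(2n+29)(2n+41)|$.
\item In $A_3$, the four pairs share the \emph{sum} $35$ rather than a difference. So the outputs $29M,27M,11M,3M$ carry the common factor $M=|2n+35|$, and they close because $29^2-27^2=11^2-3^2$.
\item For $A_2$ and $A_4$, the paper uses $\F(\F(x+7,x+8),x)=3|(x+5)(x+15)|=\F(\F(x+12,x+13),x+20)$. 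This unbalanced tree combines a first-level output with a raw entry, which your pairs-then-quadruples scheme would never consider.
\end{itemize}
Because $\F(a,b)$ depends only on $|a|$ and $|b|$, these identities give zero for every $n$. Your sign analysis and the finite computer check of small $n$ are therefore unnecessary.
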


It also gives the following classification.

\begin{corollary}\label{cor:all}
Let $L$ be a positive integer.  Then
\[
 I_L(n)\text{ reduces to zero for every }n\in\Z
 \quad\Longleftrightarrow\quad
 12\mid L\text{ and }L\ge24.
\]
\end{corollary}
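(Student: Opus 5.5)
The corollary is assembled from Theorem~\ref{th:36}, the Hickerson--Kleber results, and one gluing observation. I will prove the two directions separately.

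\emph{Necessity.} The necessity direction is quoted from \cite{HK}. First, the congruence obstruction gives $12\mid L$ whenever every $I_L(n)$ reduces to zero. I will recall briefly why. Since $\F(a,b)\equiv a+b \pmod 2$, the parity of the total sum is invariant. Since squares are $0$ or $1$ mod $3$, a $0/\pm1$ count modulo $3$ behaves similarly. A mod-$4$ (or mod-$8$) analysis of squares constrains the residues further. Together these force $4\mid L$ and $3\mid L$ for the condition to hold for \emph{all} $n$. Second, the exhaustive search of \cite{HK} exhibits an $n$ for which $I_{12}(n)$ does not reduce to zero, which rules out $L=12$. So the condition forces $12\mid L$ and $L\ge 24$.

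\emph{Sufficiency.} Let $L=12k$ with $k\ge 2$. The key observation is a gluing lemma: if the disjoint multisets $\mathcal A$ and $\mathcal B$ each reduce to zero, then so does $\mathcal A\cup\mathcal B$. To see this, reduce $\mathcal A$ to $0$ and $\mathcal B$ to $0$ independently, then apply $\F(0,0)=0$. Now write $k$ as a sum of parts taken from $\{2,3,5\}$, which is possible for every $k\ge 2$: use $k=2+2+\dots$ if $k$ is even, and $k=3+2+\dots$ if $k$ is odd. This partitions $I_{12k}(n)$ into consecutive blocks of lengths $24$, $36$ and $60$. The blocks of length $24$ and $60$ reduce to zero by \cite{HK}. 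The blocks of length $36$ reduce to zero by Theorem~\ref{th:36}; all of these results hold for every starting point. The gluing lemma then finishes the proof. Strictly, the parts $2$ and $3$ already suffice, and the length-$60$ result of \cite{HK} is not needed.

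The only delicate point is the necessity congruence argument. Since it is stated in \cite{HK}, I will cite it rather than reprove it. Everything else is immediate.
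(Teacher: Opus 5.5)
Your proposal is correct and follows the paper's proof: necessity comes from the mod-$2$/mod-$3$ parity invariants (the paper's Lemma~\ref{lem:mod}) together with Hickerson--Kleber's search ruling out $L=12$, and sufficiency comes from cutting $I_{12k}(n)$ into consecutive blocks of lengths $24$ and $36$ and merging the resulting zeros with $\F(0,0)=0$. One small correction to your recalled sketch is that no mod-$4$ or mod-$8$ analysis is needed: the invariant parity of the number of odd entries already gives $4\mid L$ (comparing $I_L(0)$ with $I_L(1)$ forces $L$ even, and then each $I_L(n)$ has $L/2$ odd entries, so $L/2$ must be even).
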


The only new case needed for this classification is $L=36$.  Section~2
records the congruence obstruction and the known length-$24$ construction.
Section~3 proves Theorem~\ref{th:36} and Corollary~\ref{cor:all}.

\section{Congruences and 24 consecutive integers}

For $p\in\{2,3\}$ and $a\in\Z$, define
\[
 \mathbf 1_p(a)=
 \begin{cases}
 1,&p\nmid a,\\
 0,&p\mid a.
 \end{cases}
\]
Since every nonzero square modulo $2$ or $3$ is $1$, we have
\[
 \mathbf 1_p\bigl(\F(a,b)\bigr)
 \equiv \mathbf 1_p(a)+\mathbf 1_p(b)\pmod2.
\]

\begin{lemma}\label{lem:mod}
If $I_L(n)$ reduces to zero for every $n\in\Z$, then $12\mid L$.
\end{lemma}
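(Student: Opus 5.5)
The plan is to use the parity invariant recorded just before the statement. For $p\in\{2,3\}$ and a finite multiset $\mathcal A$, put
\[
 N_p(\mathcal A)=\sum_{a\in\mathcal A}\mathbf 1_p(a)\pmod 2.
\]
By the displayed congruence $\mathbf 1_p(\F(a,b))\equiv\mathbf 1_p(a)+\mathbf 1_p(b)\pmod 2$, a reduction step does not change $N_p$. If $\mathcal A$ reduces to zero, the final multiset is $\{0\}$, and $N_p(\{0\})=0$. So reducing to zero forces both $N_2(\mathcal A)$ and $N_3(\mathcal A)$ to be even. The proof then reduces to a counting argument: if $12\nmid L$, I will produce some $n$ for which $N_2(I_L(n))$ or $N_3(I_L(n))$ is odd.

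First consider $p=2$. Write $L=2k+r$ with $r\in\{0,1\}$.
\begin{itemize}
\item If $r=1$, then $I_L(n)$ contains $k$ odd numbers when $n$ is even and $k+1$ odd numbers when $n$ is odd. One of these counts is odd, so $N_2$ is odd for some $n$.
\item If $r=0$, the count is exactly $k$ for every $n$, so reduction to zero for all $n$ forces $k$ even, i.e.\ $4\mid L$.
\end{itemize}
Together these give $4\mid L$.

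Next consider $p=3$. Write $L=3m+s$ with $s\in\{0,1,2\}$. The number of non-multiples of $3$ in $I_L(n)$ equals $2m$ plus the number of non-multiples among the last $s$ entries, which can be taken as any $s$ consecutive integers.
\begin{itemize}
\item If $s=1$, choose the leftover entry to be a multiple of $3$ or not; the count is then $2m$ or $2m+1$, so one choice is odd.
\item If $s=2$, choose the leftover pair to be $\{0,1\}$ or $\{1,2\}\pmod 3$; the count is then $2m+1$ or $2m+2$, so one choice is odd.
\item If $s=0$, the count is $2m$, which is always even. So $p=3$ gives only $3\mid L$, not $9$ or anything stronger.
\end{itemize}
Combining the two cases gives $4\mid L$ and $3\mid L$, hence $12\mid L$.

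No step here is a real obstacle; the argument is routine. The only care needed is in the residue counting, specifically exhibiting a suitable $n$ in each non-divisible case. Note that the $p=2$ invariant is what yields the factor $4$, since the count of odd entries must be even, while the $p=3$ invariant yields only the factor $3$, because $2m$ is automatically even.
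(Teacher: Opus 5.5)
Your proposal is correct and takes the same approach as the paper: the invariant is the parity of $\sum_{a}\mathbf 1_p(a)$ for $p\in\{2,3\}$. For odd $L$ you compare even and odd $n$, and for even $L$ you deduce that $L/2$ must be even; for $L\equiv 1,2\pmod 3$ you place the leftover entries in the right residues to get the odd count $2m+1$.
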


\begin{proof}
For a finite multiset $\mathcal A$ and $p\in\{2,3\}$, the parity of
\[
 \sum_{a\in\mathcal A}\mathbf 1_p(a)
\]
does not change during a reduction.  It must be even if $\mathcal A$ reduces
to zero.

For $p=2$, suppose that $L=2k+1$ for an integer $k\ge0$.  The sets $I_L(0)$ and $I_L(1)$
contain $k$ and $k+1$ odd integers, respectively.  One of these numbers
is odd, a contradiction.  Thus $L$ is even.  Each $I_L(n)$ then contains
\(\frac{L}{2}\) odd integers.  Hence \(\frac{L}{2}\) is even, so
$4\mid L$.

Write $L=3q+r$, where $q$ is a nonnegative integer and
$r\in\{0,1,2\}$.  If $r=1$, choose
$n\equiv1\pmod3$.  If $r=2$, choose $n\equiv0\pmod3$.  In either case,
the set $I_L(n)$ contains exactly $2q+1$ integers not divisible by $3$.
This number is odd, which again contradicts the invariant.  Hence $r=0$ and
$3\mid L$.  Together with $4\mid L$, this gives $12\mid L$.
\end{proof}

We shall also use the following result of Hickerson and
Kleber~\cite[Lemma~4]{HK}.

\begin{lemma}\label{lem:24}
For every $n\in\Z$, the set $I_{24}(n)$ reduces to zero.
\end{lemma}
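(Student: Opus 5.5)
The plan is to reduce $I_{24}(n)$ using only the operation $\F$, by splitting it into small blocks that each reduce to a small target multiset, and then combining those targets. The basic tool is that $\F(a,a)=0$, together with the identity
\[
 \F(a,a+1)=2a+1 .
\]
So a pair of consecutive integers $\{m,m+1\}$ collapses to the odd number $2m+1$.

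First, I will pair up consecutive integers. For $I_4(m)=\{m,m+1,m+2,m+3\}$ we get
\[
 \F(m,m+1)=2m+1,\qquad \F(m+2,m+3)=2m+5,
\]
and then
\[
 \F(2m+1,2m+5)=(2m+5)^2-(2m+1)^2=8(2m+3)
\]
(up to sign, which the absolute value removes). Pairing differently gives $\F(m,m+3)=3(2m+3)$ and $\F(m+1,m+2)=2m+3$. Combining these yields
\[
 \F\bigl(3(2m+3),\,2m+3\bigr)=8(2m+3)^2 .
\]
Thus every block of four consecutive integers produces a number depending on a single linear form $c=2m+3$, and there is a choice between, for example, $8c$ and $8c^2$.

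Second, I will split $I_{24}(n)$ into six blocks $I_4(n+4j)$ for $j=0,\dots,5$. The corresponding parameters $c_j=2n+8j+3$ form an arithmetic progression with difference $8$. The aim is to choose the pairings so that two blocks produce equal values, which then cancel via $\F(x,x)=0$. One way is to make combinations of $c_j$ coincide through identities such as
\[
 (c+8)^2-c^2=16(c+4).
\]
Alternatively, group three blocks into a twelve-element set reducing to some expression $E(n)$, and show the other twelve reduce to the same $E(n)$, shifted appropriately so the outputs agree. Concretely, I would look for reductions of $I_{12}(n)$ and $I_{12}(n+12)$ to polynomial expressions, and then use additional freedom, namely pairing elements at distance $d$ to get $d(2m+d)$, to match them exactly.

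The main obstacle is this exact matching step. The two halves must reduce to literally the same integer for every $n$, which amounts to a polynomial identity in $n$ built from nested absolute differences of squares, with the signs controlled for all $n$. I would first search small cases by hand for a reduction tree whose output is independent of $n$, or is a common polynomial of both halves. Since this lemma is exactly \cite[Lemma~4]{HK}, in the paper it can simply be cited as an established result rather than reproved.
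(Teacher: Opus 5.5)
Your closing sentence matches what the paper does. Lemma~\ref{lem:24} is not reproved there; it is quoted from Hickerson and Kleber (their Lemma~4), so inside this paper the citation is enough.

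As a self-contained proof, however, your proposal has a genuine gap, and it is the step you yourself flag: no reduction is ever exhibited. The mechanism you suggest also cannot supply one as stated. Under the balanced pairings, the block $I_4(n+4j)$ yields only
\[
 8|c_j|,\qquad 16|c_j|\ \ (\text{via }\F(m,m+2),\F(m+1,m+3)),\qquad 8c_j^2,\qquad c_j=2n+8j+3 .
\]
For $i\ne j$ we have $c_i\ne\pm c_j$ as polynomials, so none of block $i$'s outputs agrees identically in $n$ with one of block $j$. Hence no two blocks cancel.

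Combining blocks first, in the spirit of $(c+8)^2-c^2=16(c+4)$, does not rescue this directly. Suppose you pair blocks with equal coefficients $\alpha$ so that all results are multiples of one linear form. The pairing is then forced to be $\{0,5\},\{1,4\},\{2,3\}$. This gives $16\alpha^2d\,|2n+23|$ with $d=5,3,1$, i.e.\ three same-degree terms $xM,yM,zM$. But $\F\bigl(\F(xM,yM),zM\bigr)=|(x^2-y^2)^2M^4-z^2M^2|$ never vanishes identically. The alternative of matching reductions of $I_{12}(n)$ and $I_{12}(n+12)$ is only described as a search, not carried out.

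What works is to abandon consecutive blocks and group by symmetry about the centre $n+\tfrac{23}{2}$ of the whole interval. This is the same device the paper uses for $A_1$ and $A_3$ in the $36$-case. With $M=|2n+23|$, one explicit reduction is
\begin{align*}
 &\F(n+4,n+19)=15M,\quad \F(n+7,n+16)=9M,\quad \F(15M,9M)=144M^2,\\
 &\F(n+5,n+18)=13M,\quad \F(n+9,n+14)=5M,\quad \F(13M,5M)=144M^2;\\
 &\F\bigl(\F(n+21,n+22),\F(n+1,n+2)\bigr)=80M=\F\bigl(\F(n+13,n+15),\F(n+8,n+10)\bigr);\\
 &\F\bigl(\F(n+20,n+23),\F(n,n+3)\bigr)=720M=\F\bigl(\F(n+11,n+17),\F(n+6,n+12)\bigr).
\end{align*}
The three groups use the offsets
\[
 \{4,5,7,9,14,16,18,19\},\qquad \{1,2,8,10,13,15,21,22\},\qquad \{0,3,6,11,12,17,20,23\}.
\]
They partition $I_{24}(n)$, each reduces to zero, and $\F(0,0)=0$ merges the three zeros.

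With the citation, your proposal coincides with the paper's treatment. Without it, an explicit reduction of this kind is exactly what is missing.
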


\section{Sets of 36 consecutive integers}

The construction uses one six-element identity.

\begin{lemma}\label{lem:six}
For every $x\in\Z$, the set
\[
 \{x,x+7,x+8,x+12,x+13,x+20\}
\]
reduces to zero.
\end{lemma}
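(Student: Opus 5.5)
The plan is to find a pairing of the six entries under which the absolute values cancel. The reduction will have three first-round steps, one second-round step and a final step combining two equal entries. The key observation is that \(\F(a,b)=|a-b|\,|a+b|\) depends only on \(a^2\) and \(b^2\). So absolute values introduced at intermediate steps never matter when the results are squared again.

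Pairing the entries symmetrically about \(x+10\) looks natural, but it gives multiples \(8,12,40\) of \(x+10\), which do not obviously cancel. I will instead take the consecutive pairs \((x+7,x+8)\) and \((x+12,x+13)\), and the outer pair \((x,x+20)\). The first two pairs have difference \(1\), so
\[
\F(x+7,x+8)=|2x+15|,\qquad \F(x+12,x+13)=|2x+25|.
\]
For the outer pair,
\[
\F(x,x+20)=|40x+400|=20\,|2x+20|.
\]
The multiset is now \(\{|2x+15|,\,|2x+25|,\,20|2x+20|\}\).

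Next I combine the first two entries. Put \(z=2x+20\), so these entries are \(|z-5|\) and \(|z+5|\). Then
\[
\F(|z-5|,|z+5|)=|(z+5)^2-(z-5)^2|=20|z|=20\,|2x+20|.
\]
This is exactly the third entry, so the multiset is two copies of \(20|2x+20|\). One last step gives \(\F(c,c)=0\), which completes the reduction.

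The only real obstacle is finding this pairing; after that everything is a routine identity. Once the numbers \(7,8,12,13,20\) are read as two unit-gap pairs placed symmetrically \(\pm5\) about \(x+10\), the rest follows. Two things remain to check. First, each step removes two entries and inserts one, so the count goes \(6\to4\) after the first round, then \(\to2\) after combining \(|z-5|\) and \(|z+5|\), then \(\to1\). Second, the argument holds for every integer \(x\), including negative ones, because only squares of the entries are ever used.
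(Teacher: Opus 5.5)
Your reduction is correct, but it uses a different reduction tree from the paper's. Both proofs start from the unit-gap pairs $(x+7,x+8)$ and $(x+12,x+13)$. The paper then attaches one outer entry to each of these outputs, splitting the six entries into two triples with $\F(\F(x+7,x+8),x)=3|(x+5)(x+15)|=\F(\F(x+12,x+13),x+20)$, so the two surviving values are quadratic in $x$. You split $4+2$ instead: you combine the two unit-gap outputs with each other and the two outer entries with each other. Both sides then become the linear quantity $40|x+10|$ through the single identity $\F(u-d,u+d)=4|u|\,|d|$, used with $(u,d)=(x+10,10)$ and with $(u,d)=(2x+20,5)$.

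Your route explains why these offsets work: everything is symmetric about $x+10$. It is close in spirit to the paper's treatment of $A_3$, where pairs symmetric about $n+\tfrac{35}{2}$ give multiples of $|2n+35|$. It also works verbatim for $\{x,\,x+\tfrac{3k-1}{2},\,x+\tfrac{3k+1}{2},\,x+\tfrac{5k-1}{2},\,x+\tfrac{5k+1}{2},\,x+4k\}$ for any positive odd integer $k$, where both sides equal $8k|x+2k|$. The paper's version is just two factorization checks with a symmetric $3+3$ split. Either suffices for the main theorem.

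One harmless slip: the three first-round steps take the count from $6$ to $3$, not $4$, as your own three-element multiset shows.
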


\begin{proof}
The two triples give the same output:
\begin{align*}
 \F\bigl(\F(x+7,x+8),x\bigr)
   &=3|(x+5)(x+15)|,\\
 \F\bigl(\F(x+12,x+13),x+20\bigr)
   &=3|(x+5)(x+15)|.
\end{align*}
One more step gives zero.
\end{proof}

\begin{proof}[Proof of Theorem~\ref{th:36}]
Partition $\{0,1,\ldots,35\}$ into
\begin{align*}
 A_1&=\{0,1,7,11,24,28,34,35\},\\
 A_2&=\{2,9,10,14,15,22\},\\
 A_3&=\{3,4,12,16,19,23,31,32\},\\
 A_4&=\{13,20,21,25,26,33\},\\
 A_5&=\{5,6,8,17,18,27,29,30\}.
\end{align*}
These sets are disjoint and contain all thirty-six offsets.
For $1\le i\le5$, write $n+A_i=\{n+a:a\in A_i\}$.
Lemma~\ref{lem:six}, with $x=n+2$ and $x=n+13$, reduces
$n+A_2$ and $n+A_4$ to zero.

For $A_1$, the two groups of four have equal outputs:
\begin{align*}
 \F\bigl(\F(n,n+34),\F(n+1,n+35)\bigr)
   &=4624|2n+35|,\\
 \F\bigl(\F(n+7,n+24),\F(n+11,n+28)\bigr)
   &=4624|2n+35|.
\end{align*}
Thus $n+A_1$ reduces to zero.

For $A_3$, put $M=|2n+35|$ and use the pairs
\[
 (n+3,n+32),\quad (n+4,n+31),\quad
 (n+12,n+23),\quad (n+16,n+19).
\]
The four outputs are
\[
 29M,\qquad27M,\qquad11M,\qquad3M.
\]
They satisfy
\[
 \F(29M,27M)=112M^2=\F(11M,3M),
\]
so $n+A_3$ reduces to zero.

For $A_5$, two groups of four again have equal outputs:
\begin{align*}
 \F\bigl(\F(n+5,n+27),\F(n+6,n+17)\bigr)
   &=363|(2n+29)(2n+41)|,\\
 \F\bigl(\F(n+8,n+30),\F(n+18,n+29)\bigr)
   &=363|(2n+29)(2n+41)|.
\end{align*}
Thus $n+A_5$ reduces to zero.  Every block $n+A_i$ now gives zero.
Repeated use of $\F(0,0)=0$ merges these five zeros into one zero.
\end{proof}

\begin{proof}[Proof of Corollary~\ref{cor:all}]
Suppose first that every $I_L(n)$ reduces to zero.  Lemma~\ref{lem:mod}
gives $12\mid L$.  Hickerson and Kleber's exhaustive search showed that
$I_{12}(15)=\{15,16,\ldots,26\}$ does not reduce to zero
\cite[Section~4]{HK}.  Hence $L\ne12$, so $L\ge24$.

Conversely, write $L=12r$ for an integer $r\ge2$.  Choose nonnegative integers $a,b$
by
\[
 (a,b)=
 \begin{cases}
 (r/2,0),&2\mid r,\\
 ((r-3)/2,1),&2\nmid r.
 \end{cases}
\]
Then $r=2a+3b$.  Partition $I_L(n)$ into consecutive blocks: $a$ blocks
with $24$ elements and $b$ blocks with $36$ elements.  Lemma~\ref{lem:24}
and Theorem~\ref{th:36} reduce every block to zero.  Repeatedly combine the
resulting zeros.  This proves that $I_L(n)$ reduces to zero.
\end{proof}

\end{document}